\def\thtext#1{
  \catcode`@=11
  \gdef\@thmcountersep{. #1}
  \catcode`@=12
}
\def\threst{
  \catcode`@=11
  \gdef\@thmcountersep{.}
  \catcode`@=12
}
\theoremstyle{plain}
\newtheorem{thm}{Theorem}[section]
\newtheorem{prop}[thm]{Proposition}
\newtheorem{cor}[thm]{Corollary}
\theoremstyle{definition}
\newtheorem{dfn}[thm]{Definition}
\newtheorem{rk}[thm]{Remark}
 \def\.{.\spacefactor\@m}
\def\R{\mathbb R}
\def\a{\alpha}
\def\e{\varepsilon}
\def\D{\Delta}
\def\g{\gamma}
\def\l{\lambda}
\def\s{\sigma}
\def\0{\emptyset}
\def\:{\colon}
\def\<{\langle}
\def\>{\rangle}
\def\rom#1{\emph{#1}}
\def\({\rom(}
\def\){\rom)}
\def\ss{\subset}
\def\x{\times}
\def\bcAD{\overline{\mathstrut\cAD}}
\def\bF{{\bar F}}
\def\diam{\operatorname{diam}}
\def\Ext{\operatorname{Ext}}
\def\MST{\operatorname{MST}}
\def\mst{\operatorname{mst}}
\def\cAD{\mathcal{AD}}
\def\cD{\mathcal{D}}
\def\cM{\mathcal{M}}
\begin{document}
\title{The Gromov--Hausdorff Distances between Simplexes and Ultrametric Spaces}
\author{Alexander O.~Ivanov, Alexey A.~Tuzhilin}
\date{}
\maketitle

\begin{abstract}
In the present paper we investigate the Gromov--Hausdorff distances between a bounded metric space $X$ and so called simplex, i.e., a metric space all whose non-zero distances are the same. In the case when the simplex's cardinality does not exceed the cardinality of $X$, a new formula for this distance is obtained. The latter permits to derive an exact formula for the distance between a simplex and an ultrametric space.
\end{abstract}

%%%%%%%%%%%%%%%%%%%%%%%%%%%%%%
\section*{Introduction}
\markright{\thesection.~Introduction}
%%%%%%%%%%%%%%%%%%%%%%%%%%%%%%
A natural and rather widespread mathematical approach to compare some objects is to define a distance function between them as a measure of their  ``unlikeness'', see many examples in~\cite{DezaDeza}. In the present paper the geometry of the class of all metric spaces considered up to an isometry is investigated by means of the Gromov--Hausdorff distance.

As early as 1914, F.~Hausdorff~\cite{Hausdorff} defined a non-negative symmetric function on pairs of non-empty subsets of a metric space $X$ that is equal to the infimum of non-negative reals $r$ such that one subset is contained in the $r$-neighborhood of the other, and vice versa. Later on D.~Edwards~\cite{Edwards} and M.~Gromov~\cite{Gromov} independently generalized the Hausdorff construction to the family of all compact metric spaces in terms of their isometrical embeddings into all possible ambient spaces, see definition below. The resulting function is called the Gromov--Hausdorff distance, and the corresponding metric space $\cM$ of compact metric spaces considered up to a isometry is referred as the Gromov--Hausdorff space. The geometry of this space turns out to be rather tricky, and it is actively investigated by specialists, because, in particular, the ``space of all spaces'' has several evident applications. It is well-known that $\cM$ is path-connected, complete, separable, geodesic metric space, and that it is not proper. A detailed introduction to geometry of the Gromov--Hausdorff space can be found in~\cite[Ch.~7]{BurBurIva}.

The problem to calculate the Gromov--Hausdorff distance between two given spaces is rather non-trivial. In the present paper the authors continue to study this problem in the particular case where one of the given spaces is so-called simplex, i.e., a metric space all whose non-zero distances are the same. This case turns out to be of special interest due to several reasons. In the case of finite metric space $X$ the distances from $X$ to simplexes permit to reproduce the edges lengths of a minimal spanning tree of $X$, see~\cite{TuzMST-GH}; these distances turn out  to be useful in investigation of isometries of the Gromov--Hausdorff space, see~\cite{ITAutVesnik};  in terms of those distances the generalized Borsuk problem can be solved, see~\cite{IvaTuzBorsuk}.

In paper~\cite{IvaTuzSimpDist} the distances between simplexes and compact metric spaces are calculated in several particular cases. Later on these results are generalized to the case of arbitrary bounded metric spaces, see~\cite{GrigIvaTuz}. Namely, several additional characteristics of the bounded metric spaces were introduced, and in terms of those characteristic either exact formulas for the Gromov--Hausdorff distance to simplexes were written, or exact lower and upper estimates for these distances were given.

In the present paper the formulas from~\cite{GrigIvaTuz} for the Gromov--Hausdorff distance from an arbitrary bounded metric space $X$ to a simplex get a geometrical interpretation, that permits to rewrite them in a more convenient from in the case of simplexes having at most the same cardinality as the space $X$, see Theorem~\ref{thm:extr}. As an application, an exact formulas for the distances from a simplex to an arbitrary finite ultrametric space are obtained, see Theorem~\ref{thm:ultra}. (Recall that a metric space is said to be ultrametric, if the triangle formed by any its three points is an isosceles one, and its ``base'' does not exceed its  ``legs'', see also the definition below).  In addition, a criterion for a finite metric space to be ultrametric in terms of minimal spanning trees is obtained (Theorem~\ref{thm:ultra-and-mst}).

The work is partly supported by President RF Program supporting leading scientific schools of Russia (Project NSh--6399.2018.1, Agreement~075--02--2018--867), by RFBR, Project~19-01-00775-a, and also by MGU scientific schools support program.

%%%%%%%%%%%%%%%%%%%%%%%%%%%%%%
\section{Preliminaries}
\markright{\thesection.~Preliminaries}
%%%%%%%%%%%%%%%%%%%%%%%%%%%%%%
Let $X$ be an arbitrary set. By $\#X$ we denote the \emph{cardinality\/} of the set $X$.

Let $X$ be an arbitrary metric space. The distance between any its points $x$ and $y$ we denote by $|xy|$. If $A,B\ss X$ are non-empty subsets of $X$, then put $|AB|=\inf\bigl\{|ab|:a\in A,\,b\in B\bigr\}$. For $A=\{a\}$, we write $|aB|=|Ba|$ instead of $|\{a\}B|=|B\{a\}|$.

For each point $x\in X$ and a number $r>0$, by $U_r(x)$ we denote the open ball with center $x$ and radius $r$; for any non-empty $A\ss X$ and a number $r>0$ put $U_r(A)=\cup_{a\in A}U_r(a)$.

%%%%%%%%%%%%%%%%%%%%%%%%%%%%%%
\subsection{Hausdorff and Gromov--Hausdorff Distances}
%%%%%%%%%%%%%%%%%%%%%%%%%%%%%%
For non-empty $A,\,B\ss X$ put
\begin{multline*}
d_H(A,B)=\inf\bigl\{r>0:A\ss U_r(B),\ \text{and}\ B\ss U_r(A)\bigr\}\\
=\max\{\sup_{a\in A}|aB|,\ \sup_{b\in B}|Ab|\}.
\end{multline*}
This value is called the \emph{Hausdorff distance between $A$ and $B$}. It is well-known, see~\cite{BurBurIva}, that the Hausdorff distance is a metric on the set of all non-empty bounded closed subsets of $X$.

Let $X$ and $Y$ be metric spaces. A triple $(X',Y',Z)$ consisting of a metric space $Z$ together with its subsets $X'$ and $Y'$ isometric to $X$ and $Y$, respectively, is called a \emph{realization of the pair $(X,Y)$}. The \emph{Gromov--Hausdorff distance $d_{GH}(X,Y)$ between $X$ and $Y$} is the infimum of real numbers $r$ such that there exists a realization  $(X',Y',Z)$ of the pair $(X,Y)$ with $d_H(X',Y')\le r$. It is well-known~\cite{BurBurIva} that $d_{GH}$ is a metric on the set $\cM$ of all compact metric spaces considered up to an isometry.

A metric space $X$ is called a \emph{simplex}, if all its non-zero distances are the same. By $\l\D$ we denote a simplex all whose non-zero distances equal $\l>0$. For $\l=1$ the space $\l\D$ is denoted by $\D$ to be short.

\begin{prop}[\cite{BurBurIva}]\label{prop:GH_simple}
Let $X$ be an arbitrary metric space, and $\D$ be a single-point space, then for any $\l>0$ it holds $d_{GH}(\l\D,X)=\frac12\diam X$.
\end{prop}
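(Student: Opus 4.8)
The plan is to observe first that, since $\Delta$ is a single point, the space $\l\Delta$ has no non-zero distances at all, so it is simply a one-point metric space $\{p\}$ and the parameter $\l$ plays no role. Thus the claim reduces to the equality $d_{GH}(\{p\}, X) = \frac12\diam X$, which I would prove by establishing the two inequalities separately. If $\diam X = 0$ then $X$ is itself a one-point space and both sides vanish, so one may assume $\diam X > 0$ (the case $\diam X = \infty$ being covered by reading the same estimates in $[0,\infty]$).

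For the lower bound, take any realization $(X', \{p'\}, Z)$ of the pair $(X, \{p\})$ and set $r = d_H(X', \{p'\}) = \sup_{x \in X'} |xp'|$. For any two points $x, y \in X'$ the triangle inequality in $Z$ gives $|xy| \le |xp'| + |p'y| \le 2r$, hence $\diam X = \diam X' \le 2r$. Taking the infimum over all realizations yields $d_{GH}(\{p\}, X) \ge \frac12 \diam X$.

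For the upper bound, I would exhibit an explicit realization: equip the disjoint union $Z = X \sqcup \{p\}$ with the function that restricts to the given metric on $X$ and satisfies $|xp| = \frac12 \diam X$ for every $x \in X$. Since $\diam X > 0$, this function is symmetric and positive off the diagonal; the triangle inequalities internal to $X$ hold by hypothesis, while any inequality involving $p$ reduces either to $|xy| \le \diam X = |xp| + |py|$ or to $\frac12 \diam X \le |xy| + \frac12 \diam X$, both evidently true. Hence $Z$ is a metric space, $(X, \{p\}, Z)$ is a realization of $(X, \{p\})$, and $d_H(X, \{p\}) = \sup_{x\in X} |xp| = \frac12\diam X$, so $d_{GH}(\{p\}, X) \le \frac12\diam X$. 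Combined with the previous paragraph this gives the asserted equality.

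I do not expect a genuine obstacle here: the argument is entirely routine, and the only points needing a little care are the verification that the ad hoc distance on $X \sqcup \{p\}$ satisfies the triangle inequality and the bookkeeping for the degenerate cases $\diam X \in \{0,\infty\}$.
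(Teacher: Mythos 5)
Your argument is correct, and since the paper only cites this fact from Burago--Burago--Ivanov without reproducing a proof, there is nothing in the text for it to diverge from: both halves (the lower bound via the triangle inequality through $p'$ in any realization, and the upper bound via the explicit metric on $X\sqcup\{p\}$ with all distances to $p$ equal to $\frac12\diam X$) constitute the standard proof of this proposition, and your handling of the degenerate cases $\diam X\in\{0,\infty\}$ is fine.
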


\begin{thm}[\cite{GrigIvaTuz}]\label{thm:dist-n-simplex-bigger-dim}
Let $X$ be an arbitrary bounded metric space, and $\#X<\#\l\D$, then
$$
2d_{GH}(\l\D,X)=\max\{\l,\diam X-\l\}.
$$
\end{thm}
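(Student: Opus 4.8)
The plan is to establish the two inequalities $2d_{GH}(\l\D,X) \le \max\{\l,\diam X - \l\}$ and $2d_{GH}(\l\D,X) \ge \max\{\l,\diam X - \l\}$ separately, working throughout with realizations of the pair $(\l\D,X)$ inside an ambient metric space $Z$, or equivalently with admissible correspondences/relations between $\l\D$ and $X$. Since $\#X < \#\l\D$, the simplex $\l\D$ has strictly more points than $X$, which is the feature we will exploit: in any correspondence $R \subset \l\D \times X$, by the pigeonhole principle at least two distinct vertices $s_1,s_2$ of $\l\D$ must be matched to a common point $x \in X$, and more generally the surjection from (a subset of) $\l\D$ onto $X$ forces the fibers to be large.

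First I would prove the upper bound. The idea is to construct an explicit realization. Fix any surjection $f\colon \l\D \to X$ (possible since $\#X \le \#\l\D$), and build a metric on the disjoint-union-type space $Z = \l\D \sqcup X$ by declaring the distance between $s \in \l\D$ and $x \in X$ to be a suitable constant $t$, chosen to satisfy all triangle inequalities. The triangle inequalities that must be checked are: $|s_1 s_2| \le |s_1 x| + |x s_2|$, i.e. $\l \le 2t$; and $|x_1 x_2| \le |x_1 s| + |s x_2|$, i.e. $\diam X \le 2t$ is too crude — instead one uses a point $s$ with $f(s) = x_1$ so that $|x_1 s| $ can be taken $0$ or small; the correct bookkeeping gives that one needs $t \ge \l/2$ and $t \ge (\diam X)/2 - \l/2$. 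Choosing $t = \tfrac12\max\{\l,\diam X-\l\}$ and checking that with this value all the triangle inequalities hold (this is the routine computation I will not grind through, but the key relations are $\l \le 2t$ and $\diam X \le \l + 2t$) yields a realization with $d_H(\l\D,X) \le t$, hence $2d_{GH}(\l\D,X) \le \max\{\l,\diam X - \l\}$.

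Next I would prove the lower bound, splitting into the two terms of the max. Let $r > d_{GH}(\l\D,X)$ and take a realization in some $Z$ with $d_H(\l\D,X) < r$. For the bound $2d_{GH} \ge \l$: because $\#\l\D > \#X$, two distinct simplex points $s_1 \ne s_2$ lie within distance $< r$ of a common point $x \in X$ (each is within $r$ of $X$, and pigeonhole forces a repeat), so $\l = |s_1 s_2| \le |s_1 x| + |x s_2| < 2r$; letting $r \downarrow d_{GH}$ gives $\l \le 2 d_{GH}(\l\D,X)$. For the bound $2d_{GH} \ge \diam X - \l$: take $x_1, x_2 \in X$ with $|x_1 x_2|$ close to $\diam X$; each is within $r$ of some simplex point, say $s_1, s_2$; then $|x_1 x_2| \le |x_1 s_1| + |s_1 s_2| + |s_2 x_2| < r + |s_1 s_2| + r \le 2r + \l$, using $|s_1 s_2| \le \l$ whether or not $s_1 = s_2$. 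Hence $\diam X < 2r + \l$, and letting $r \downarrow d_{GH}$ gives $\diam X - \l \le 2d_{GH}(\l\D,X)$. Combining, $2d_{GH}(\l\D,X) \ge \max\{\l,\diam X - \l\}$.

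The main obstacle I anticipate is the upper-bound construction: one must verify that the constant $t = \tfrac12\max\{\l,\diam X-\l\}$ really makes $d_Z$ a genuine metric on $Z = \l\D \sqcup X$, i.e. that no triangle inequality is violated — in particular the mixed triangles with two points in $\l\D$ and one in $X$, and with two points in $X$ and one in $\l\D$, must be handled with care, and one should double-check that $t > 0$ (equivalently, that $\max\{\l,\diam X-\l\}$ is positive, which holds since $\l>0$). This is where using the surjection $f$ matters: it does not directly enter the distance formula above but guarantees that the Hausdorff distance in this realization is at most $t$ rather than merely bounded by $t$ plus a defect coming from unmatched points of $X$. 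All of this is elementary but needs to be laid out cleanly; everything else follows from the triangle inequality and a pigeonhole argument.
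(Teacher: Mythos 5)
Your lower bound is fine: the pigeonhole argument (two simplex points forced within $r$ of a common point of $X$, giving $\l<2r$) and the diameter comparison (giving $\diam X<\l+2r$) are exactly the standard estimates, and they yield $2d_{GH}(\l\D,X)\ge\max\{\l,\diam X-\l\}$ correctly. (For the record, the paper itself does not prove this theorem --- it is quoted from~\cite{GrigIvaTuz} --- so the comparison is with the standard argument used there.)

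The gap is in your upper bound. If the cross-distance between every $s\in\l\D$ and every $x\in X$ is a \emph{constant} $t$, then the triangle inequality $|x_1x_2|\le|x_1s|+|sx_2|=2t$ must hold for \emph{all} triples $x_1,x_2\in X$, $s\in\l\D$; you cannot ``use a point $s$ with $f(s)=x_1$'', because the inequality is not something you get to test only at a chosen $s$. Hence the constant choice forces $2t\ge\diam X$, and with $t=\tfrac12\max\{\l,\diam X-\l\}$ the proposed $d_Z$ is simply not a metric whenever $\diam X>\l$ --- which is precisely the case in which the term $\diam X-\l$ matters. Relatedly, your ``key relations'' $\l\le 2t$ and $\diam X\le\l+2t$ are not triangle inequalities of the constant-$t$ space at all (the second would involve four points); they are the distortion bounds of a correspondence. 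That is the correct fix: take $R\ss\l\D\x X$ to be the graph of a surjection $f\:\l\D\to X$ and use $2d_{GH}(\l\D,X)\le\dis R$, where $\dis R=\sup\bigl|\,\l-|f(s_1)f(s_2)|\,\bigr|\le\max\{\l,\diam X-\l\}$ (the value $\l$ coming from pairs with $f(s_1)=f(s_2)$, the value $\diam X-\l$ from pairs with $|f(s_1)f(s_2)|>\l$). Equivalently, if you insist on an explicit ambient space, the cross-distance must depend on the pair, e.g.\ $|sx|=\min\bigl\{t+|f(s)x|,\;t+\l\bigr\}$ with $2t=\max\{\l,\diam X-\l\}$; this does satisfy all triangle inequalities and gives $d_H\le t$. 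As written, your construction establishes only $2d_{GH}(\l\D,X)\le\max\{\l,\diam X\}$, so the upper half of the theorem is not proved.
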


Let  $X$ be a set, and $m$ a cardinal number that does not exceed $\#X$. By $\cD_m(X)$ we denote the family of all possible partitions of the set $X$ into $m$ subsets.

Now, let $X$ be a metric space. Then for each $D=\{X_i\}_{i\in I}\in\cD_m(X)$ put
$$
\diam D=\sup_{i\in I}\diam X_i.
$$
Further, for any non empty $A,B\ss X$ put
$$
|AB|=\inf\bigl\{|ab|:(a,b)\in A\x B\bigr\},
$$
and for any $D=\{X_i\}_{i\in I}\in\cD_m(X)$ define
$$
\a(D)=\inf\bigl\{|X_iX_j|:i\ne j\bigr\}.
$$

\begin{prop}\label{prop:GH-dist-alpha-beta}
Let $X$ be an arbitrary bounded metric space, and $m=\#\l\D\le\#X$. Then
$$
2d_{GH}(\l\D,X)=\inf_{D\in\cD_m(X)}\max\{\diam D,\,\l-\a(D),\,\diam X-\l\}.
$$
\end{prop}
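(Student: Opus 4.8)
The plan is to work with the standard reformulation of the Gromov--Hausdorff distance via correspondences (see \cite{BurBurIva}): for metric spaces $A,B$ one has $2d_{GH}(A,B)=\inf_R\dis R$, where $R$ ranges over the set $\cR(A,B)$ of all relations $R\ss A\x B$ whose projections to $A$ and to $B$ are surjective, and $\dis R=\sup\bigl\{\bigl||aa'|-|bb'|\bigr|:(a,b),(a',b')\in R\bigr\}$. Write $\l\D=\{p_i\}_{i\in I}$, so $\#I=m$ and $|p_ip_j|=\l$ whenever $i\ne j$. I will prove the two inequalities separately.

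For the inequality $\le$, take any $D=\{X_i\}_{i\in I}\in\cD_m(X)$ (indexed by the same $I$) and consider $R_D=\{(p_i,x):i\in I,\ x\in X_i\}$, which is a correspondence because the $X_i$ are non-empty and cover $X$. A pair of elements of $R_D$ with equal first coordinate contributes at most $\sup_i\diam X_i=\diam D$; a pair $(p_i,x),(p_j,y)$ with $i\ne j$ contributes $\bigl|\l-|xy|\bigr|$, and since $\a(D)\le|xy|\le\diam X$ on such pairs this is at most $\max\{\l-\a(D),\diam X-\l\}$. Hence $\dis R_D\le\max\{\diam D,\l-\a(D),\diam X-\l\}$, and taking the infimum over $D\in\cD_m(X)$ gives $2d_{GH}(\l\D,X)\le\inf_{D\in\cD_m(X)}\max\{\diam D,\l-\a(D),\diam X-\l\}$.

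For the reverse inequality it suffices to show that every $R\in\cR(\l\D,X)$ admits a partition $D\in\cD_m(X)$ with $\max\{\diam D,\l-\a(D),\diam X-\l\}\le\dis R$. Put $X_i=\{x\in X:(p_i,x)\in R\}$; these sets are non-empty and cover $X$, and directly from the definition of $\dis R$ one reads off $\diam X_i\le\dis R$ for all $i$, $|X_iX_j|\ge\l-\dis R$ for $i\ne j$, and $\diam X\le\l+\dis R$ (given $x,y\in X$, choose $R$-partners $p_i,p_j$ and use $|p_ip_j|\in\{0,\l\}$). Fix a total order on $I$ and set $\tilde X_i=X_i\sm\bigcup_{j<i}X_j$; these are pairwise disjoint, cover $X$, and satisfy $\tilde X_i\ss X_i$. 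If all of them are non-empty, then $D=\{\tilde X_i\}_{i\in I}\in\cD_m(X)$ inherits $\diam D\le\dis R$ and $\a(D)\ge\l-\dis R$ from $\{X_i\}$, which together with $\diam X-\l\le\dis R$ is exactly what is wanted. Otherwise some $\tilde X_{i_0}=\0$, that is $X_{i_0}\ss\bigcup_{j<i_0}X_j$; any $a\in X_{i_0}$ then lies in some $X_j$ with $j\ne i_0$, so $(p_{i_0},a),(p_j,a)\in R$ forces $\dis R\ge\bigl||p_{i_0}p_j|-|aa|\bigr|=\l$. In this case refine the (fewer than $m$) non-empty pieces among the $\tilde X_i$ to an arbitrary partition $D\in\cD_m(X)$ — possible since $\#X\ge m$; splitting parts does not increase their diameters, so $\diam D\le\dis R$ still holds, and now $\a(D)\ge0\ge\l-\dis R$. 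Either way the desired $D$ exists, so $2d_{GH}(\l\D,X)=\inf_R\dis R\ge\inf_D\max\{\diam D,\l-\a(D),\diam X-\l\}$, which combined with the first inequality proves the claim.

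The delicate point is the dichotomy in the last paragraph: the greedy ``difference'' construction $\{\tilde X_i\}$ can have empty pieces — precisely when the cover $\{X_i\}$ admits no transversal — and then it is not a legitimate member of $\cD_m(X)$. The resolution is that this degeneracy is never ``for free'': it forces two points of the simplex to be $R$-related to a common point of $X$, whence $\dis R\ge\l$, and this is exactly the slack that lets one refine to $m$ parts arbitrarily without spoiling the estimate on $\a(D)$. (If $X$ is allowed to be infinite, the only extra care needed is the purely set-theoretic fact that the non-empty pieces can be split into exactly $m$ parts, which holds because $\#X\ge m$.)
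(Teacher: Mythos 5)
The paper itself states Proposition~\ref{prop:GH-dist-alpha-beta} without proof (it is background quoted from~\cite{GrigIvaTuz}), and your argument via correspondences, $2d_{GH}=\inf_R\dis R$ as in~\cite{BurBurIva}, is the standard route to it and is essentially correct. The upper bound via $R_D=\{(p_i,x):x\in X_i\}$ with $\dis R_D\le\max\{\diam D,\,\l-\a(D),\,\diam X-\l\}$ is right, and the key point in the lower bound --- that emptiness of some $\tilde X_{i_0}$ forces two simplex points to share an $X$-partner, hence $\dis R\ge\l$, so that an arbitrary refinement of the non-empty pieces into exactly $m$ parts (possible since they number at most $m$ and $\#X\ge m$) still satisfies the estimate --- is exactly the observation needed to pass from the covering $\{X_i\}$ to a genuine partition. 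One small correction for infinite $m$: when you set $\tilde X_i=X_i\sm\bigcup_{j<i}X_j$ you must choose a \emph{well-order} on the index set, not merely a total order; with an arbitrary total order a point belonging to $X_i$ for all $i$ in a subset with no least element would lie in no $\tilde X_i$, and the $\tilde X_i$ would fail to cover $X$. With that one-word fix the proof is complete.
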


For an arbitrary metric space $X$ put
$$
\e(X)=\inf\bigl\{|xy|:x,y\in X,\,x\ne y\bigr\}.
$$
Notice that $\e(X)\le\diam X$, and for a bounded metric space $X$ the equality holds if and only if $X$ is a simplex.

\begin{thm}[\cite{IvaTuzSimpDist}]\label{thm:dist-n-simplex-same-dim}
Let $X$ be a finite metric space, and $\#\l\D=\#X$, then
$$
2d_{GH}(\l\D,X)=\max\bigl\{\l-\e(X),\,\diam X-\l\bigr\}.
$$
\end{thm}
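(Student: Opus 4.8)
The plan is to derive the statement directly from Proposition~\ref{prop:GH-dist-alpha-beta}. Write $n=\#X=\#\l\D$, a finite number. A partition of an $n$-element set into $n$ non-empty blocks must consist of singletons only: indeed, the total number of elements equals $\sum_i\#X_i\ge n$, with equality forcing every $\#X_i=1$. Hence the family $\cD_n(X)$ consists of the single element $D_0=\bigl\{\{x\}\bigr\}_{x\in X}$, and the infimum in Proposition~\ref{prop:GH-dist-alpha-beta} is taken over (and attained at) this one partition.

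It then remains to evaluate the three quantities under the maximum at $D=D_0$. Since each block of $D_0$ is a one-point set, $\diam D_0=\sup_{x\in X}\diam\{x\}=0$. By the definition of $\a$, we have $\a(D_0)=\inf\bigl\{\bigl|\{x\}\{y\}\bigr|:x\ne y\bigr\}=\inf\bigl\{|xy|:x,y\in X,\ x\ne y\bigr\}=\e(X)$. Substituting into Proposition~\ref{prop:GH-dist-alpha-beta} yields $2d_{GH}(\l\D,X)=\max\bigl\{0,\ \l-\e(X),\ \diam X-\l\bigr\}$.

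Finally, I would observe that the term $0$ is redundant. We have $(\l-\e(X))+(\diam X-\l)=\diam X-\e(X)\ge 0$, because $\e(X)\le\diam X$ for every metric space; hence at least one of the numbers $\l-\e(X)$, $\diam X-\l$ is non-negative, so their maximum is already $\ge 0$. Dropping the $0$ gives $2d_{GH}(\l\D,X)=\max\bigl\{\l-\e(X),\ \diam X-\l\bigr\}$, as asserted.

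I do not expect a genuine obstacle: the result is just the specialization of Proposition~\ref{prop:GH-dist-alpha-beta} to equal finite cardinalities, where the space of admissible partitions degenerates to the single discrete partition. The only points requiring a word of care are the elementary counting argument identifying $\cD_n(X)$ with $\{D_0\}$ and the harmless elimination of the $0$ term. (One could instead reprove the statement from scratch by estimating distortions of correspondences between $\l\D$ and $X$ induced by bijections, but with Proposition~\ref{prop:GH-dist-alpha-beta} available this is unnecessary.)
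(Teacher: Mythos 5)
Your derivation is correct: for $m=\#X=n$ finite, the only partition of $X$ into $n$ non-empty blocks is the partition into singletons, for which $\diam D_0=0$ and $\a(D_0)=\e(X)$, and Proposition~\ref{prop:GH-dist-alpha-beta} then gives $2d_{GH}(\l\D,X)=\max\{0,\l-\e(X),\diam X-\l\}$; your observation that the two nontrivial terms sum to $\diam X-\e(X)\ge0$ correctly disposes of the $0$. Note, however, that this paper does not prove the statement at all: it is imported from~\cite{IvaTuzSimpDist}, where it is established directly by estimating distortions of correspondences between $\l\D$ and $X$ (the route you mention parenthetically), and historically it predates the general partition formula of Proposition~\ref{prop:GH-dist-alpha-beta}, which comes from~\cite{GrigIvaTuz}. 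So your argument is not the original one but a clean specialization of the later, more general machinery; it is logically sound provided Proposition~\ref{prop:GH-dist-alpha-beta} is taken as given (as this paper does), and its only cost is that it hides the elementary correspondence argument behind a heavier black box, while its benefit is brevity and uniformity with the rest of the paper's treatment of the case $m\le\#X$.
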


For an arbitrary metric space $X$, $m=\#X$, put
\begin{gather*}
\a_m^-(X)=\inf_{D\in\cD_m(X)}\a(D),\qquad \a_m(X)=\a_m^+(X)=\sup_{D\in\cD_m(X)}\a(D),\\
d_m(X)=d_m^-(X)=\inf_{D\in\cD_m(X)}\diam D,\qquad d_m^+(X)=\sup_{D\in\cD_m(X)}\diam D.
\end{gather*}

\begin{rk}
We use short notations for $\a_m^+(X)$ and $d_m^-(X)$, because in the formulas given below these two values appears more often than there ``twins'' $\a_m^-(X)$ and $d_m^+(X)$.
\end{rk}

\begin{thm}[\cite{GrigIvaTuz}]\label{thm:collective}
Let $X$ be an arbitrary bounded metric space, and $m=\#\l\D\le\#X$.
\begin{enumerate}
\item\label{thm:collective:1} If $2d_m^+(X)>\diam X-\a_m(X)$, then put
\begin{align*}
&a=\max\biggl\{\a_m^-(X)+d_m(X),\,\frac{\diam X+\a_m^-(X)}2,\,\diam X-d_m^+(X)\biggr\},\\
&b=\a_m(X)+d_m^+(X).
\end{align*}
Then $a<b$, and
\begin{enumerate}
\item\label{thm:collective:1:1} for $\l\le a$ the equality
$$
2d_{GH}(\l\D,X)=\max\bigl\{\diam X-\l,\,d_m(X)\bigr\}
$$
holds\rom;
\item\label{thm:collective:1:2} for $a\le\l\le b$ the exact estimations
\begin{multline*}
\max\bigl\{\diam X-\l,\,d_m(X),\,\l-\a_m(X)\bigr\}\le2d_{GH}(\l\D,X)\\
\le\min\bigl\{d_m^+(X),\,\l-\a_m^-(X)\bigr\}
\end{multline*}
hold\rom;
\item\label{thm:collective:1:3} for $\l\ge b$ the equality $2d_{GH}(\l\D,X)=\l-\a_m(X)$ holds.
\end{enumerate}
\item\label{thm:collective:2} If $2d_m^+(X)\le\diam X-\a_m(X)$, then
$$
2d_{GH}(\l\D,X)=\max\bigl\{\diam X-\l,\,\l-\a_m(X)\bigr\}.
$$
\end{enumerate}
\end{thm}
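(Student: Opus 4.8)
The plan is to derive Theorem~\ref{thm:collective} from Proposition~\ref{prop:GH-dist-alpha-beta}, which recasts the quantity in question as the optimization
\[
2d_{GH}(\l\D,X)=\inf_{D\in\cD_m(X)}\max\bigl\{\diam D,\ \l-\a(D),\ \diam X-\l\bigr\}.
\]
Since the term $\diam X-\l$ does not depend on $D$, it splits off, and everything reduces to analysing the one-variable function $F(\l):=\inf_{D\in\cD_m(X)}\max\{\diam D,\ \l-\a(D)\}$, after which $2d_{GH}(\l\D,X)=\max\{F(\l),\,\diam X-\l\}$. First I would record the two ``free'' one-sided bounds: for \emph{every} $D$ one has $\diam D\ge d_m(X)$ and $\a(D)\le\a_m(X)$, hence $\max\{\diam D,\l-\a(D)\}\ge\max\{d_m(X),\l-\a_m(X)\}$ for every $D$, so $F(\l)\ge\max\{d_m(X),\l-\a_m(X)\}$ and therefore
\[
2d_{GH}(\l\D,X)\ \ge\ \max\bigl\{\diam X-\l,\ d_m(X),\ \l-\a_m(X)\bigr\}\qquad\text{for all }\l .
\]
This is exactly the lower estimate of the middle regime in part~\refitem{thm:collective:1}, and it will turn out to be sharp at the two ends.

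Next I would obtain upper bounds by plugging in \emph{extremal partitions}. A partition $D$ with $\diam D$ arbitrarily close to $d_m(X)$ gives, in the limit, $F(\l)\le\max\{d_m(X),\ \l-\a_m^-(X)\}$, since every $D$ satisfies $\a(D)\ge\a_m^-(X)$; and a partition $D$ with $\a(D)$ arbitrarily close to $\a_m(X)$ gives, in the limit, $F(\l)\le\max\{d_m^+(X),\ \l-\a_m(X)\}$, since every $D$ satisfies $\diam D\le d_m^+(X)$. Combining each of these with the $\diam X-\l$ term and comparing the various competing terms — all of which are piecewise linear in $\l$ — one sees that $a$ and $b$ are precisely the breakpoints controlling the comparison. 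Specifically, I would check that: for $\l\ge a$, simultaneously $\diam X-\l\le d_m^+(X)$ and $\diam X-\l\le\l-\a_m^-(X)$; for $\l\le a$, using the standing hypothesis $2d_m^+(X)>\diam X-\a_m(X)$ of part~\refitem{thm:collective:1}, $\l-\a_m(X)\le\max\{\diam X-\l,\ d_m(X)\}$; and $b=\a_m(X)+d_m^+(X)$ is precisely where $\l-\a_m(X)$ overtakes $d_m^+(X)$. Verifying $a<b$ in part~\refitem{thm:collective:1}, and the collapse of the middle band in part~\refitem{thm:collective:2} (where $2d_m^+(X)\le\diam X-\a_m(X)$ flips one of these comparisons), is then a short direct computation from the same inequalities. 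Granting this bookkeeping: for $\l\ge b$ the lower bound above simplifies to $\l-\a_m(X)$ and the second upper bound matches it; for $a\le\l\le b$ the two upper bounds assemble into $\min\{d_m^+(X),\l-\a_m^-(X)\}$, leaving the genuine gap of the middle regime; and in part~\refitem{thm:collective:2} the two envelopes coincide off a single point, producing the clean formula $\max\{\diam X-\l,\l-\a_m(X)\}$.

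The remaining — and, I expect, hardest — point is the small-$\l$ equality $2d_{GH}(\l\D,X)=\max\{\diam X-\l,d_m(X)\}$ for $\l\le a$. Here the lower bound collapses to $\max\{\diam X-\l,d_m(X)\}$ by the comparison $\l-\a_m(X)\le\max\{\diam X-\l,d_m(X)\}$ noted above, so only the upper bound $F(\l)\le\max\{\diam X-\l,d_m(X)\}$ is at stake. Unwinding it, one must exhibit a single partition $D$ that is simultaneously nearly diameter-minimal \emph{and} has $\a(D)$ large enough that $\l-\a(D)\le\max\{\diam X-\l,d_m(X)\}$ — whereas the partition realizing $d_m(X)$ need not have $\a(D)$ large at all. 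So the real content is a trade-off lemma describing which pairs $(\a(D),\diam D)$ are jointly attainable: concretely, that a near-diameter-minimal partition may be coarsened — merging mutually close parts — so as to raise $\a(D)$ up to the level demanded by $\l\le a$ without pushing $\diam D$ past $\max\{\diam X-\l,d_m(X)\}$. This merging/clustering construction is where the bulk of the work lies. Once it is available, the only loose ends are the elementary limiting arguments needed when $X$ is infinite, so that the infima and suprema over $\cD_m(X)$ need not be attained, and the routine verification of the inequalities relating $a$, $b$ and the three quantities defining $a$.
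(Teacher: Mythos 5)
Since Theorem~\ref{thm:collective} is quoted from~\cite{GrigIvaTuz}, this paper contains no proof to compare against, so I judge your argument on its own terms. Your skeleton is the right one: reduce to Proposition~\ref{prop:GH-dist-alpha-beta}, record the universal lower bound $\max\{\diam X-\l,\,d_m(X),\,\l-\a_m(X)\}$, and the two upper envelopes $\inf_D\max\{\diam D,\l-\a(D)\}\le\max\{d_m(X),\l-\a_m^-(X)\}$ and $\le\max\{d_m^+(X),\l-\a_m(X)\}$ obtained from near-diameter-minimal and near-$\a$-maximal partitions; parts \refitem{thm:collective:1:2} (the two bounds), \refitem{thm:collective:1:3} and \refitem{thm:collective:2} do follow from these by the comparisons you sketch. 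The genuine gap is exactly the point you flag as hardest, the upper bound in \refitem{thm:collective:1:1}: you leave it resting on an unproved ``trade-off lemma'' asserting that one can find a single partition that is simultaneously nearly diameter-minimal and has $\a(D)$ large, to be produced by ``merging mutually close parts''. Besides being the unproven bulk of the claim, that construction is not even well formed as stated: merging parts of an element of $\cD_m(X)$ produces a partition into fewer than $m$ parts, and it is not explained how to restore cardinality $m$ without losing control of $\a$ or of $\diam$.

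Moreover, no such joint-attainability statement is needed; the reason $a$ is defined as a \emph{maximum} is precisely that $\l\le a$ guarantees \emph{at least one} of $\l\le\a_m^-(X)+d_m(X)$, $\l\le\frac{\diam X+\a_m^-(X)}2$, $\l\le\diam X-d_m^+(X)$ (you appear to have read it as imposing all three). Argue by cases. In the first two cases take $D$ with $\diam D\le d_m(X)+\e$; since $\a(D)\ge\a_m^-(X)$ for every $D$, one gets $\l-\a(D)\le\l-\a_m^-(X)\le\max\{d_m(X),\diam X-\l\}$, and letting $\e\to0$ gives $2d_{GH}(\l\D,X)\le\max\{\diam X-\l,d_m(X)\}$. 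In the third case \emph{every} partition satisfies $\diam D\le d_m^+(X)\le\diam X-\l$, and $\l\le\diam X-d_m^+(X)$ together with the standing hypothesis $2d_m^+(X)>\diam X-\a_m(X)$ gives $2\l-\diam X\le\diam X-2d_m^+(X)<\a_m(X)$, so there is a $D$ with $\a(D)>2\l-\diam X$, i.e.\ $\l-\a(D)<\diam X-\l$, and again the infimum is at most $\diam X-\l$. So the ``hardest point'' dissolves into the same elementary bookkeeping, with no clustering lemma. Two smaller loose ends: the word ``exact'' in \refitem{thm:collective:1:2} asserts sharpness of both estimates, which your argument does not address at all; and the strict inequality $a<b$ cannot come from ``a short direct computation'' --- from the stated quantities one only gets $a\le b$ (for a three-point simplex and $m=2$ one has $a=b$), so strictness needs either extra hypotheses or a separate argument.
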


In computing the Gromov--Hausdorff distances between finite metric spaces, minimal spanning trees turn out to be rather important. The reason is that the edges lengths of these trees are related to geometrical characteristics of partitions of the corresponding ambient space. Recall the definitions.

Let  $G=(V,E)$ be an arbitrary (simple) graph with a vertex set $V$ and an edge set $E$. If $V$ is a metric space, then the \emph{lengthes $|e|$ of edges $e=vw$ of $G$} are defined as the distances $|vw|$ between their ends $v$ and $w$ in the space $X$; the \emph{length  $|G|$ of the graph $G$} is defined as the sum of the lengths of all its edges.

Let $X$ be a finite metric space. Define the number $\mst(X)$ as the length of a shortest tree of the form $(X,E)$. The resulting value is referred as the  \emph{length of a minimal spanning tree on $X$}. Each tree $G=(X,E)$ such that  $|G|=\mst(X)$ is called a  \emph{minimal spanning tree on $X$}. Notice that for any such $X$ a minimal spanning tree on $X$ does always exist. By  $\MST(X)$ we denote the set of all minimal spanning trees on $X$.

Notice that, generally speaking, a minimal spanning tree is not unique.  For $G\in\MST(X)$ by  $\s(G)$ we denote the vector composed of the edges lengths of the tree $G$, that are written in the decreasing order.

\begin{prop}\label{prop:mst-spect}
For any $G_1,G_2\in\MST(X)$, the equality $\s(G_1)=\s(G_2)$ holds.
\end{prop}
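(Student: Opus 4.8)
The plan is to show that the multiset of edge lengths of a minimal spanning tree on a finite metric space $X$ is an invariant of $X$, i.e.\ independent of the particular tree chosen; sorting this multiset in decreasing order then gives the well-defined vector $\s(G)$. The standard and cleanest route is the \emph{exchange argument} familiar from matroid theory (spanning trees of the complete graph on $X$ with edge weights $|xy|$ form the bases of a graphic matroid). I would first fix $G_1,G_2\in\MST(X)$ and write their edge multisets as $E_1=\{e_1,\dots,e_{n-1}\}$ and $E_2=\{f_1,\dots,f_{n-1}\}$, where $n=\#X$, with $|e_1|\ge\cdots\ge|e_{n-1}|$ and $|f_1|\ge\cdots\ge|f_{n-1}|$, so that $\s(G_i)$ is exactly the sorted weight vector. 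The claim $\s(G_1)=\s(G_2)$ is equivalent to $|e_k|=|f_k|$ for all $k$.

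The key step is the exchange lemma: if $e\in E_1\sm E_2$, then there exists $f\in E_2\sm E_1$ such that $|e|=|f|$, both $\bigl(E_1\sm\{e\}\bigr)\cup\{f\}$ and $\bigl(E_2\sm\{f\}\bigr)\cup\{e\}$ are spanning trees on $X$, and these new trees are again minimal. Concretely: removing $e=vw$ from the tree $G_1$ splits $X$ into two components $A\ni v$ and $B\ni w$; the unique path in $G_2$ joining $v$ to $w$ must contain at least one edge $f$ with one end in $A$ and the other in $B$, so $\bigl(E_1\sm\{e\}\bigr)\cup\{f\}$ is again a spanning tree. By minimality of $G_1$ we get $|f|\ge|e|$. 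Symmetrically, removing $f$ from $G_2$ splits $X$ into two components, the $G_1$-path between its ends crosses the cut, and since $e$ itself crosses this cut (as $v\in A$, $w\in B$) one shows $\bigl(E_2\sm\{f\}\bigr)\cup\{e\}$ is a spanning tree, whence $|e|\ge|f|$ by minimality of $G_2$. Thus $|e|=|f|$, and both exchanged trees, having the same total length as the originals, are minimal spanning trees.

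To finish, I would run an induction on $\#(E_1\sm E_2)$. If this number is $0$ then $E_1=E_2$ and we are done. Otherwise pick $e\in E_1\sm E_2$ of maximal length among $E_1\sm E_2$, apply the exchange lemma to obtain $f\in E_2\sm E_1$ with $|f|=|e|$, and replace $G_1$ by $G_1'=\bigl(E_1\sm\{e\}\bigr)\cup\{f\}$, which is in $\MST(X)$, has the same sorted weight vector as $G_1$ (we merely swapped an edge for one of equal weight), and satisfies $\#(E_1'\sm E_2)<\#(E_1\sm E_2)$. By the induction hypothesis $\s(G_1')=\s(G_2)$, and since $\s(G_1)=\s(G_1')$ we conclude $\s(G_1)=\s(G_2)$.

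The main obstacle is the bookkeeping in the exchange lemma: one must verify carefully that the edge $f$ selected from the $G_2$-path genuinely crosses the cut induced by deleting $e$ from $G_1$, and, in the reverse direction, that $e$ crosses the cut induced by deleting $f$ from $G_2$, so that both exchanges produce trees rather than disconnected or cyclic graphs. Once both crossing facts are established, the two minimality inequalities $|f|\ge|e|$ and $|e|\ge|f|$ drop out immediately, and the rest is the routine induction above.
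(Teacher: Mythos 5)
Your exchange argument is correct and complete in outline; note that the paper itself states Proposition~\ref{prop:mst-spect} without proof, treating it as a standard fact about minimal spanning trees, so there is no in-paper argument to compare against --- what you wrote is exactly the classical matroid-style exchange proof. Two small points to tighten. First, the reason $\bigl(E_2\sm\{f\}\bigr)\cup\{e\}$ is a spanning tree is not that ``$e$ crosses the cut $A,B$'' (that is the cut coming from $G_1$); it is that $f$ was chosen on the unique $G_2$-path from $v$ to $w$, so deleting $f$ from $G_2$ separates $v$ from $w$, and adding $e=vw$ reconnects the two pieces. Second, you should record why $f\notin E_1$: the only edge of $G_1$ crossing the cut $(A,B)$ is $e$ itself, and $f\ne e$ since $e\notin E_2$; this is what guarantees both that $\bigl(E_1\sm\{e\}\bigr)\cup\{f\}$ is a genuine tree and that $\#(E_1'\sm E_2)$ strictly decreases in your induction. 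With these two justifications inserted, the argument goes through as you describe: each swap exchanges edges of equal length, so the sorted weight vector is preserved at every step, and after finitely many swaps the two edge sets coincide, giving $\s(G_1)=\s(G_2)$.
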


Proposition~\ref{prop:mst-spect} implies that following definition is correct.

\begin{dfn}
For any finite metric space $X$, by $\s(X)$ we denote the vector $\s(G)$ for an arbitrary  $G\in\MST(X)$, and we call it the \emph{$\mst$-spectrum of the space $X$}.
\end{dfn}

%%%%%%%%%%%%%%%%%%%%%%%%%%%%%%
\section{The Gromov--Hausdorff Distance to Simplexes in Terms of Extreme Points}
\markright{\thesection.~The Gromov--Hausdorff Distance in Terms of Extreme Points}
%%%%%%%%%%%%%%%%%%%%%%%%%%%%%%

Let $X$ be a bounded metric space $X$, and $m\le\#X$ a cardinal number.  In accordance with Proposition~\ref{prop:GH-dist-alpha-beta}, to calculate the exact value of the function $g_m(\l)=d_{GH}(\l\D,X)$ it suffices to know the pairs $\bigl(\a(D),\diam D\bigr)$ for all $D\in\cD_m(X)$. Let us consider these pairs as points in the plane with fixed standard coordinates $(\a,d)$. By $\cAD_m(X)\ss\R^2$ we denote the set of all such pairs, and also put $h_{\a,d}(\l):=\max\{d,\,\l-\a\}$. Reformulate Proposition~\ref{prop:GH-dist-alpha-beta} in the new notations using the fact that the function $\diam X-\l$ does not depend on partitions $D$, and changing the order of $\inf$ and $\max$.

\begin{cor}\label{cor:GH-dist-alpha-diam-set}
Let $X$ be an arbitrary bounded metric space, and $m=\#\l\D\le\#X$. Then
$$
2d_{GH}(\l\D,X)=\max\bigl\{\diam X-\l,\,\inf_{(\a,d)\in\cAD_m(X)}h_{\a,d}(\l)\bigr\}.
$$
\end{cor}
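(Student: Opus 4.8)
The plan is to derive the statement directly from Proposition~\ref{prop:GH-dist-alpha-beta} by an elementary rearrangement; there is essentially no new content here, and the only work is bookkeeping.

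First I would fix $\l>0$ and an arbitrary partition $D=\{X_i\}_{i\in I}\in\cD_m(X)$, and note that by the very definition $h_{\a,d}(\l)=\max\{d,\,\l-\a\}$ one has
$$
\max\{\diam D,\,\l-\a(D),\,\diam X-\l\}=\max\bigl\{h_{\a(D),\diam D}(\l),\,\diam X-\l\bigr\},
$$
since this just regroups the three terms appearing in the maximum in Proposition~\ref{prop:GH-dist-alpha-beta}.

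Second I would take the infimum over all $D\in\cD_m(X)$ (a nonempty family, as $m\le\#X$). Since the quantity $\diam X-\l$ does not depend on $D$, I would apply the elementary identity $\inf_{x}\max\{c,f(x)\}=\max\{c,\inf_x f(x)\}$, valid for any constant $c$ and any real-valued $f$ on a nonempty set: the inequality ``$\ge$'' follows from $\max\{c,f(x)\}\ge c$ and $\max\{c,f(x)\}\ge f(x)$ together with monotonicity of $\inf$, while ``$\le$'' follows by picking, for each $\e>0$, a point $x$ with $f(x)<\inf_x f+\e$. With $c=\diam X-\l$ and $f(D)=h_{\a(D),\diam D}(\l)$ this yields
$$
2d_{GH}(\l\D,X)=\max\Bigl\{\diam X-\l,\,\inf_{D\in\cD_m(X)}h_{\a(D),\diam D}(\l)\Bigr\}.
$$

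Finally I would invoke the definition of $\cAD_m(X)\ss\R^2$ as the set of all pairs $\bigl(\a(D),\diam D\bigr)$ with $D$ ranging over $\cD_m(X)$. Because $h_{\a,d}(\l)$ depends on $D$ only through the pair $(\a(D),\diam D)$, collapsing partitions that produce the same pair changes neither the set of attained values nor its infimum, so $\inf_{D\in\cD_m(X)}h_{\a(D),\diam D}(\l)=\inf_{(\a,d)\in\cAD_m(X)}h_{\a,d}(\l)$. Substituting this into the previous display gives exactly the claimed formula. No step is a genuine obstacle; the one point deserving a word of care is the commutation of $\inf$ with $\max$ against a constant, which is precisely the elementary identity recalled above.
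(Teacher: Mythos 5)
Your proposal is correct and is exactly the argument the paper has in mind: the corollary is obtained from Proposition~\ref{prop:GH-dist-alpha-beta} by regrouping the maximum, commuting $\inf$ with $\max$ against the constant $\diam X-\l$, and identifying the infimum over partitions with the infimum over $\cAD_m(X)$. Your write-up simply makes explicit the elementary identity the paper invokes in one sentence.
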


In what follows it is convenient to work with the closure $\bcAD_m(X)$ of the set $\cAD_m(X)$. Notice that as $\cAD_m(X)$, so as $\bcAD_m(X)$ lie in the parallelogram formed by the intersection of two strips: the horizontal strip between the straight lines $y=d_m^-(X)$ and $y=d_m^+(X)$, the slant strip between the straight lines $y=\l-\a_m^-(X)$ and $y=\l-\a_m^+(X)$. Thus, the both sets are bounded, and the set $\bcAD_m(X)$ is compact.

Put
\begin{equation}\label{eq:FbF}
F(\l)=\inf_{(\a,d)\in\cAD_m(X)}h_{\a,d}(\l),\ \
\bF(\l)=\inf_{(\a,d)\in\bcAD_m(X)}h_{\a,d}(\l).
\end{equation}
Since for each fixed  $\l$ the value $h_{\a,d}(\l)$ depends continuously on $\a$ and $d$, then $F(\l)=\bF(\l)$, and hence the following result holds.

\begin{cor}\label{cor:GH-dist-alpha-diam-set-closure}
Let $X$ be an arbitrary bounded metric space and $m=\#\l\D\le\#X$. Then
$$
2d_{GH}(\l\D,X)=\max\bigl\{\diam X-\l,\,\inf_{(\a,d)\in\bcAD_m(X)}h_{\a,d}(\l)\bigr\}.
$$
\end{cor}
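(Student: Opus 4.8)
The statement to prove is Corollary~\ref{cor:GH-dist-alpha-diam-set-closure}, which asserts that replacing $\cAD_m(X)$ by its closure $\bcAD_m(X)$ does not change the infimum in the formula for $2d_{GH}(\l\D,X)$ from Corollary~\ref{cor:GH-dist-alpha-diam-set}.

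The plan is as follows. Since Corollary~\ref{cor:GH-dist-alpha-diam-set} already gives $2d_{GH}(\l\D,X)=\max\{\diam X-\l,\,F(\l)\}$ with $F(\l)=\inf_{(\a,d)\in\cAD_m(X)}h_{\a,d}(\l)$, and the desired identity is $2d_{GH}(\l\D,X)=\max\{\diam X-\l,\,\bF(\l)\}$ with $\bF(\l)=\inf_{(\a,d)\in\bcAD_m(X)}h_{\a,d}(\l)$, it suffices to show $F(\l)=\bF(\l)$. The text has in effect already indicated the argument in the paragraph containing equation~\eqref{eq:FbF}, so the proof is essentially a matter of spelling this out. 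First I would note that since $\cAD_m(X)\ss\bcAD_m(X)$, taking the infimum over the larger set can only make it smaller, hence $\bF(\l)\le F(\l)$. For the reverse inequality, fix $\l$ and observe that $(\a,d)\mapsto h_{\a,d}(\l)=\max\{d,\l-\a\}$ is a continuous function of $(\a,d)\in\R^2$, being the maximum of two continuous (affine) functions. Given any point $(\a_0,d_0)\in\bcAD_m(X)$, choose a sequence of points $(\a_k,d_k)\in\cAD_m(X)$ converging to $(\a_0,d_0)$; by continuity $h_{\a_k,d_k}(\l)\to h_{\a_0,d_0}(\l)$, so $F(\l)\le\lim_k h_{\a_k,d_k}(\l)=h_{\a_0,d_0}(\l)$. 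Taking the infimum over $(\a_0,d_0)\in\bcAD_m(X)$ gives $F(\l)\le\bF(\l)$, and therefore $F(\l)=\bF(\l)$.

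Combining $F(\l)=\bF(\l)$ with Corollary~\ref{cor:GH-dist-alpha-diam-set} immediately yields
\[
2d_{GH}(\l\D,X)=\max\bigl\{\diam X-\l,\,F(\l)\bigr\}=\max\bigl\{\diam X-\l,\,\bF(\l)\bigr\}=\max\bigl\{\diam X-\l,\,\inf_{(\a,d)\in\bcAD_m(X)}h_{\a,d}(\l)\bigr\},
\]
which is the claim.

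There is no real obstacle here: the only point requiring a word of care is that $\bcAD_m(X)$ is the closure of $\cAD_m(X)$ in $\R^2$, so every point of the closure is a limit of points of $\cAD_m(X)$ and the sequential argument above applies (one could equally well phrase it purely with infima: $\inf$ over a set equals $\inf$ over its closure for any function that is continuous, or even just lower semicontinuous, on the ambient space). The boundedness and compactness of $\bcAD_m(X)$ noted in the preceding paragraph are not needed for this particular corollary, though they will presumably be used later; the statement follows solely from continuity of $h_{\a,d}(\l)$ in $(\a,d)$.
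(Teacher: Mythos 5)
Your proof is correct and follows the same route as the paper: the paper also deduces the corollary from Corollary~\ref{cor:GH-dist-alpha-diam-set} by observing that $F(\l)=\bF(\l)$, since $h_{\a,d}(\l)$ is continuous in $(\a,d)$ and the infimum of a continuous function over a set equals its infimum over the closure. Your write-up merely spells out the two inequalities that the paper leaves implicit.
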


Further, notice that for any point $(\a,d)\in\R^2$, the graph of the function $y=h_{\a,d}(\l)$ is an angle with the vertex at the point $T_{\a,d}=(\a+d,d)$; one side of the angle is horizontal, and its direction is opposite to the one of the abscissa axis; the second side has the same direction as the bisectrix of the first quadrant. Notice that for any $(\a,d),\,(\a',d')$ such that $\a\ge\a'$ and $d\le d'$ the inequality $h_{\a,d}(\l)\le h_{\a',d'}(\l)$ holds for all $\l$, thus, calculating $d_{GH}(\l\D,X)$, we can omit $(\a',d')\in\bcAD_m(X)$ such that there exists a pair $(\a,d)\in\bcAD_m(X)$ with $\a\ge\a'$, $d\le d'$.

A point $(\a,d)\in\bcAD_m(X)$ is said to be \emph{extreme}, if there is no another point $(\a',d')\in\bcAD_m(X)$, $(\a',d')\ne (\a,d)$, such that $\a'\ge\a$ and $d'\le d$. By $\Ext_m(X)$ we denote the set of all extreme points from $\bcAD_m(X)$.

\begin{rk}\label{rk:oder}
Define an ordering relation in the plane as follows: $(\a,d)\le(\a',d')$ iff $\a\ge\a'$ and $d\le d'$. Then for each fixed $\l$ the mapping $(\a,d)\mapsto h_{\a,d}(\l)$ is monotonic. A point $(\a,d)\in\bcAD_m(X)$ is extreme, if and only if it is a maximal element of the set $\bcAD_m(X)$ with respect to this ordering.
\end{rk}

\begin{prop}\label{prop:Extr}
The set $\Ext_m(X)$ is not empty.
\end{prop}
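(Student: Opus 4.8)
The plan is to exhibit an explicit maximal element of the compact set $\bcAD_m(X)$ with respect to the ordering $(\a,d)\le(\a',d')\iff \a\ge\a'$ and $d\le d'$ described in Remark~\ref{rk:oder}; by that remark, producing such a maximal element is the same as producing an extreme point. First I would recall that $\bcAD_m(X)$ is non-empty (since $m\le\#X$, there is at least one partition $D\in\cD_m(X)$, hence at least one pair $(\a(D),\diam D)$) and compact, being the closure of a bounded subset of $\R^2$, as already noted in the text after Corollary~\ref{cor:GH-dist-alpha-diam-set-closure}.

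The key step is a standard compactness argument tailored to this partial order. Consider the continuous function $\psi(\a,d)=\a-d$ on $\bcAD_m(X)$; since $\bcAD_m(X)$ is compact and $\psi$ continuous, $\psi$ attains its maximum on some non-empty compact subset $K\ss\bcAD_m(X)$. On $K$ I would then maximize the continuous function $\a$ (equivalently, by the choice of $K$, minimize $d$); again by compactness this maximum is attained at some point $p=(\a_0,d_0)\in K\ss\bcAD_m(X)$. I claim $p$ is extreme. Indeed, suppose $(\a',d')\in\bcAD_m(X)$ with $\a'\ge\a_0$ and $d'\le d_0$. Then $\a'-d'\ge\a_0-d_0=\max_{\bcAD_m(X)}\psi$, so in fact $\a'-d'=\a_0-d_0$ and $(\a',d')\in K$; but on $K$ the value $\a$ is maximized at $p$, so $\a'\le\a_0$, forcing $\a'=\a_0$ and then $d'=\a'-(\a'-d')=\a_0-(\a_0-d_0)=d_0$, i.e.\ $(\a',d')=p$. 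Hence there is no point of $\bcAD_m(X)$ strictly above $p$ in the ordering, so $p\in\Ext_m(X)$ and $\Ext_m(X)\ne\0$.

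The only thing to watch is that the two successive optimizations are legitimate: each maximizes a continuous real-valued function over a non-empty compact set, so the maxima exist, and the second optimization takes place over $K$, which is closed in the compact set $\bcAD_m(X)$ and hence itself compact and non-empty. There is no real obstacle here; the argument is a routine ``lexicographic maximization'' on a compact set, and the one subtlety — ensuring that a maximal element in this coordinatewise order actually exists rather than only a supremum being approached — is exactly what the compactness of $\bcAD_m(X)$ (guaranteed by passing to the closure) buys us. An alternative one-line route would be to invoke that every non-empty compact subset of $\R^2$ has a maximal element for any closed partial order, but since the relation here is given so concretely it is cleaner to argue directly as above.
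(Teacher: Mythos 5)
Your argument is correct: maximizing the continuous function $\psi(\a,d)=\a-d$ over the non-empty compact set $\bcAD_m(X)$ and then maximizing $\a$ over the set of maximizers does produce a point that is maximal for the order of Remark~\ref{rk:oder}, i.e.\ an extreme point. The paper argues in the same compactness-plus-optimization spirit but with a different pair of functionals: it first minimizes $d$ (the minimum being $d_m(X)$) and then maximizes $\a$ on that slice, a genuine lexicographic optimization in which the second stage is needed to get extremality; your scalarization via $\a-d$ is in fact stronger, since $\psi$ is strictly increasing for the partial order, so \emph{any} maximizer of $\psi$ is already extreme and your second optimization over $K$ is redundant (though harmless). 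The only slip is the parenthetical claim that maximizing $\a$ on $K$ is ``equivalently'' minimizing $d$: on $K$ one has $d=\a-c$ with $c=\max\psi$, so maximizing $\a$ there is equivalent to \emph{maximizing} $d$; since your verification of extremality only uses that $\a$ is maximized on $K$, this does not affect the proof. One difference in outcome worth noting: the paper's construction exhibits the extreme point with smallest diameter coordinate $\bigl(\a_0,d_m(X)\bigr)$, which is the point actually used later in the ultrametric computation, whereas your point is just some extreme point; for the purposes of Proposition~\ref{prop:Extr} either suffices.
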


\begin{proof}
As it is already mentioned above, the set $\bcAD_m(X)$ is compact, and hence the continuous function $\pi_2\:\bcAD_m(X)\to\R$, $\pi_2\:(\a,d)\mapsto d$, takes its least value (which is equal to $d_m(X)$) at it. Therefore, the set $M=\bigl\{(\a,d)\in\bcAD_m(X):\pi_2(\a,d)=d_m(X)\bigr\}$ is not empty and, due to continuity of the function $\pi_2$, it is compact, so the function $\pi_1\:M\to\R$, $\pi_1\:(\a,d)\mapsto\a$, takes its greatest value at some point $\bigl(\a_0,d_m(X)\bigr)\in\bcAD_m(X)$. It is clear that this point is extreme.
\end{proof}

\begin{thm}\label{thm:extr}
Let $X$ be an arbitrary bounded metric space, and $m=\#\l\D\le\#X$. Then
$$
2d_{GH}(\l\D,X)=\max\bigl\{\diam X-\l,\,\inf_{(\a,d)\in\Ext_m(X)}h_{\a,d}(\l)\bigr\}.
$$
\end{thm}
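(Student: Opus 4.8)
The plan is to deduce Theorem~\ref{thm:extr} from Corollary~\ref{cor:GH-dist-alpha-diam-set-closure} by showing that restricting the infimum of $h_{\a,d}(\l)$ from all of $\bcAD_m(X)$ to the subset $\Ext_m(X)$ does not change its value, i.e.
$$
\inf_{(\a,d)\in\bcAD_m(X)}h_{\a,d}(\l)=\inf_{(\a,d)\in\Ext_m(X)}h_{\a,d}(\l)
$$
for every $\l$. Since $\Ext_m(X)\ss\bcAD_m(X)$, the inequality ``$\ge$'' is immediate, so the real content is the reverse inequality: for each point $p=(\a',d')\in\bcAD_m(X)$ we must produce an extreme point $q=(\a,d)\in\Ext_m(X)$ with $h_{\a,d}(\l)\le h_{\a',d'}(\l)$. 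By the monotonicity observation already recorded in the text (and in Remark~\ref{rk:oder}), it suffices to find $q\in\Ext_m(X)$ with $q\ge p$ in the ordering $\a\ge\a'$, $d\le d'$, because that ordering is exactly the one along which $(\a,d)\mapsto h_{\a,d}(\l)$ decreases.

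So the key step is the following dominance claim: every point of the compact set $\bcAD_m(X)$ is below (in the sense $\a\ge\a'$, $d\le d'$) some extreme, i.e.\ maximal, point. First I would fix $p=(\a',d')\in\bcAD_m(X)$ and consider the ``upper-left quadrant'' cut
$$
Q_p=\bigl\{(\a,d)\in\bcAD_m(X):\a\ge\a',\ d\le d'\bigr\}.
$$
This set is non-empty (it contains $p$) and, being the intersection of the compact set $\bcAD_m(X)$ with a closed quadrant, it is compact. Now I repeat the minimization argument from the proof of Proposition~\ref{prop:Extr}, but inside $Q_p$: the continuous projection $\pi_2\:(\a,d)\mapsto d$ attains its minimum on $Q_p$, the corresponding level set is again compact and non-empty, and on it the continuous projection $\pi_1\:(\a,d)\mapsto\a$ attains its maximum at some point $q=(\a_0,d_0)$. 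By construction $q\in Q_p$, so $q\ge p$. It remains to check that $q$ is extreme in $\bcAD_m(X)$, not merely maximal in $Q_p$: if some $(\a'',d'')\in\bcAD_m(X)$ satisfied $\a''\ge\a_0\ge\a'$ and $d''\le d_0\le d'$, then $(\a'',d'')\in Q_p$ as well, whence $d''=d_0$ by minimality of $d_0$ on $Q_p$ and then $\a''=\a_0$ by maximality of $\a_0$ on that level set; thus $q$ admits no strictly dominating point and $q\in\Ext_m(X)$.

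Combining: for every $\l$ and every $p\in\bcAD_m(X)$ there is $q\in\Ext_m(X)$ with $h_q(\l)\le h_p(\l)$, hence $\inf_{\Ext_m(X)}h_{\a,d}(\l)\le\inf_{\bcAD_m(X)}h_{\a,d}(\l)$, and the opposite inequality is trivial, so the two infima coincide. Substituting this equality into Corollary~\ref{cor:GH-dist-alpha-diam-set-closure} yields the asserted formula. I expect the only genuinely delicate point to be the verification that the point $q$ produced by the two-stage optimization inside $Q_p$ is extreme in the whole of $\bcAD_m(X)$ and not just maximal within the truncated set $Q_p$; this is handled by the short chain of (in)equalities above, using that any dominator of $q$ automatically lies in $Q_p$. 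Everything else (compactness of $Q_p$, attainment of extrema, the sign of the inequality under the ordering) is routine and already essentially present in the proof of Proposition~\ref{prop:Extr} and in the monotonicity remark preceding Theorem~\ref{thm:extr}.
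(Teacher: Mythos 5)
Your proof is correct, and it deviates from the paper's only in how the two-stage compactness argument is deployed. You establish a $\l$-independent dominance lemma: every $p\in\bcAD_m(X)$ is dominated, in the ordering of Remark~\ref{rk:oder}, by some extreme point, produced by minimizing $d$ and then maximizing $\a$ over the quadrant cut $Q_p$; the equality of the two infima then follows for all $\l$ simultaneously from the monotonicity of $(\a,d)\mapsto h_{\a,d}(\l)$. The paper instead fixes $\l$, passes to the nonempty compact set $\bcAD_m(X,\l)$ of minimizers of $h_{\a,d}(\l)$ on $\bcAD_m(X)$ (nonemptiness obtained via a convergent subsequence), and runs the same minimize-$d$-then-maximize-$\a$ procedure inside that set, the point obtained being extreme in all of $\bcAD_m(X)$ because any dominator is again a minimizer by monotonicity. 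Your route buys a reusable, $\l$-free structural fact about $\Ext_m(X)$ (it is cofinal in $\bcAD_m(X)$ with respect to the ordering), while the paper's route additionally shows that for each fixed $\l$ the infimum is attained at an extreme point. The genuinely delicate step in your argument --- checking that the point produced inside $Q_p$ is extreme in the whole of $\bcAD_m(X)$ rather than merely maximal in $Q_p$ --- is handled correctly by your observation that any dominator of that point automatically lies in $Q_p$, forcing equality first of the $d$-coordinates and then of the $\a$-coordinates.
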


\begin{proof}
Let $\bF(\l)$ be given by the formula form~(\ref{eq:FbF}). Put
$$
H(\l)=\inf_{(\a,d)\in\Ext_m(X)}h_{\a,d}(\l).
$$

Since $\Ext_m(X)\ss\bcAD_m(X)$, then for all $\l$ we have $H(\l)\ge\bF(\l)$. Now, prove the inverse inequality.

Fix an arbitrary $\l$. By definition of the function $\bF(\l)$, there exists a sequence $(\a_i,d_i)\in\bcAD_m(X)$ such that $h_{\a_i,d_i}(\l)\to\bF(\l)$. Since the set  $\bcAD_m(X)$ is compact, then we can assume (passing to a subsequence if it is necessary) that the sequence $(\a_i,d_i)$ tends to some $(\a',d')\in\bcAD_m(X)$. Since $h_{\a,d}(\l)$ is continuous with respect to $(\a,d)$, them we have $h_{\a',d'}(\l)=\bF(\l)$.

Put
$$
\bcAD_m(X,\l)=\bigl\{(\a',d')\in\bcAD_m(X):h_{\a',d'}(\l)=\bF(\l)\bigr\}.
$$
As it is already shown, the set $\bcAD_m(X,\l)$ is non-empty. Let us prove that this set contains an extreme point.

The continuity of the function $h_{\a,d}(\l)$ with respect to $(\a,d)$ implies that $\bcAD_m(X,\l)$ is closed, and hence it is compact. Put $d_0=\inf\bigl\{d:(\a,d)\in\bcAD_m(X,\l)\bigr\}$. Since the set $\bcAD_m(X,\l)$ is compact, then there exists an $\a$ such that $(\a,d_0)\in\bcAD_m(X,\l)$.

By $\bcAD_m(X,\l,d_0)$ we denote the set of all such pairs $(\a,d_0)$. As it is shown above, this set is non-empty. The continuity of the function $h_{\a,d}(\l)$ with respect to $\a$ implies that $\bcAD_m(X,\l,d_0)$ is a compact set. Put $\a_0=\sup\{\a:(\a,d_0)\in\bcAD_m(X,\l,d_0)\}$. Since the set  $\bcAD_m(X,\l,d_0)$ is compact, then the point $(\a_0,d_0)$ belongs to $\bcAD_m(X,\l,d_0)\ss\bcAD_m(X)$. However, this point is extreme, because otherwise we obtain a contradiction with the definitions of $d_0$ and $\a_0$. The latter implies that $H(\l)\le\bF(\l)$, the proof is complete.
\end{proof}

%%%%%%%%%%%%%%%%%%%%%%%%%%%%%%
\section{The Gromov--Hausdorff Distance between Simplexes and Ultrametric Spaces}
\markright{\thesection.~The Case of Ultrametric Spaces}
%%%%%%%%%%%%%%%%%%%%%%%%%%%%%%
Now apply the above results to obtain specific explicit formulas for the Gromov--Hausdorff distance in the particular case of  \emph{ultrametric\/} spaces, i.e\., the metric spaces $X$ such that the distance function satisfies the following enforced triangle inequality:
$$
|xz|\le\max\bigl\{|xy|,\,|yz|\bigr\}
$$
for all $x,y,z\in X$. This inequality implies similar ``polygon inequality'', namely, for an arbitrary set of points $x_1, x_2,\ldots,x_k$ of the space $X$, the inequality
$$
|x_1x_k|\le\max\bigl\{|x_1x_2|,|x_2x_3|,\ldots,|x_{k-1}x_k|\bigr\}
$$
holds.

\begin{thm}\label{thm:ultra-and-mst}
Let $X$ be a finite metric space, and $G$ a minimal spanning tree on $X$. For any distinct $v,w\in X$, by $\g_{vw}$ we denote the unique path in $G$ connecting $v$ and $w$. Then $X$ is ultrametric if and only if for any distinct points $v,w\in X$ the equality
\begin{equation}\label{eq:ultra-crit}
|vw|=\max_{e\in E(\g_{vw})}|e|
\end{equation}
holds.
\end{thm}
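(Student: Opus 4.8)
The plan is to prove the two implications separately, in both directions exploiting the elementary behaviour of paths in a tree together with the polygon inequality recorded just above the statement.

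For the ``only if'' direction, assume $X$ is ultrametric and fix distinct $v,w\in X$. If $vw\in E(G)$ then $\g_{vw}$ is the single edge $vw$ and (\ref{eq:ultra-crit}) is trivial, so assume otherwise. Writing $v=x_1,x_2,\dots,x_k=w$ for the consecutive vertices of $\g_{vw}$, we have $E(\g_{vw})=\{x_1x_2,\dots,x_{k-1}x_k\}$, and the polygon inequality immediately gives $|vw|\le\max_{e\in E(\g_{vw})}|e|$. For the reverse inequality I would invoke the exchange (cut) property of minimal spanning trees, which is valid in an arbitrary finite metric space: pick any $e\in E(\g_{vw})$; since $e$ lies on the $v$--$w$ path, deleting it from $G$ splits $G$ into two subtrees, one containing $v$ and the other containing $w$, so adjoining the edge $vw$ reconnects them into a spanning tree $G'$ with $|G'|=\mst(X)-|e|+|vw|$. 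Minimality of $G$ forces $|G'|\ge\mst(X)$, hence $|vw|\ge|e|$; as $e\in E(\g_{vw})$ was arbitrary, $|vw|\ge\max_{e\in E(\g_{vw})}|e|$, and (\ref{eq:ultra-crit}) follows.

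For the ``if'' direction, assume (\ref{eq:ultra-crit}) holds for every pair of distinct points, and take $x,y,z\in X$; we must verify $|xz|\le\max\{|xy|,|yz|\}$, and we may assume the three points are pairwise distinct, the remaining cases being immediate. The key observation is purely graph-theoretic: in the tree $G$ the union $\g_{xy}\cup\g_{yz}$ is a connected subgraph containing both $x$ and $z$, hence it contains the unique $x$--$z$ path, so $E(\g_{xz})\ss E(\g_{xy})\cup E(\g_{yz})$. Applying (\ref{eq:ultra-crit}) three times,
\[
|xz|=\max_{e\in E(\g_{xz})}|e|\le\max\Bigl\{\max_{e\in E(\g_{xy})}|e|,\ \max_{e\in E(\g_{yz})}|e|\Bigr\}=\max\{|xy|,|yz|\},
\]
which is exactly the ultrametric inequality.

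I expect the only point requiring genuine care to be, in the ``only if'' part, the verification that the swapped graph $G'$ is indeed a spanning tree — equivalently, that removing $e$ disconnects $v$ from $w$; this holds precisely because $e$ belongs to $\g_{vw}$, and once it is in place the argument is the textbook cut property of minimal spanning trees. No deeper obstacle is anticipated: both directions reduce to the fact that edge sets of tree paths behave well under union, i.e. $E(\g_{xz})\ss E(\g_{xy})\cup E(\g_{yz})$ and its use in combination with the polygon inequality.
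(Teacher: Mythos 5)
Your proof is correct and follows the same overall strategy as the paper: the forward implication combines the polygon inequality with the minimality of $G$, and the reverse implication derives the ultrametric inequality from~(\ref{eq:ultra-crit}) by comparing maxima of edge lengths along tree paths. Two minor differences are worth noting. In the forward direction you spell out the exchange (cut) argument showing $|vw|\ge|e|$ for every $e\in E(\g_{vw})$, which the paper simply asserts as a consequence of $G$ being a minimal spanning tree; your explicit verification is welcome and correct. In the reverse direction the paper argues by cases, depending on whether one of the three points lies on the path joining the other two, introducing the median vertex $x$ of the tripod in the generic case; your single observation that $\g_{xy}\cup\g_{yz}$ is a connected subgraph of the tree containing $x$ and $z$, hence $E(\g_{xz})\ss E(\g_{xy})\cup E(\g_{yz})$, subsumes both cases and gives a slightly cleaner derivation of $|xz|\le\max\{|xy|,|yz|\}$. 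No gaps.
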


\begin{proof}
At first, let $X$ be an ultrametric space. Choose arbitrary distinct $v,w\in X$. If $vw\in E(G)$, then Equality~(\ref{eq:ultra-crit}) holds. Now, let $v$ and $w$ be not adjacent. Since $G$ is a minimal spanning tree, then $|vw|\ge\max_{e\in E(\g_{vw})}|e|$. The inverse inequality is the  ``polygon inequality'' in an ultrametric space.

Conversely, let Equality~(\ref{eq:ultra-crit}) hold for any distinct $v,w\in E(G)$. Choose any three pairwise distinct points $u,v,w\in X$. If one of them, say $v$, lies in the path $G$ connecting two other points, i.e\., in the path $\g_{u,w}$, then
$$
|uw|=\max_{e\in E(\g_{uw})}|e|=\max_{e\in E(\g_{uv})\cup E(\g_{vw})}|e|=\max\bigl\{|uv|,|vw|\bigr\},
$$
and hence the ``triangle'' $uvw$ is isosceles, and its ``base'' is not longer than its ``legs''.

Now, suppose that no one of the points under consideration lies in the path in $G$ connecting two other points. Then the three paths in $G$ connecting pairwise the vertices $u,v,w$ have a common point $x\in X$ distinct form these three. Therefore, each of those three paths can be represented as a union of the pair of paths connecting $x$ and $u,v,w$. We have
\begin{multline*}
|uw|=\max_{e\in E(\g_{uw})}|e|=\max_{e\in E(\g_{ux})\cup E(\g_{xw})}|e|\\
\le\max_{e\in E(\g_{ux})\cup E(\g_{xw})\cup E(\g_{xv})}|e|=\max\bigl\{|uv|,|vw|\bigr\},
\end{multline*}
the proof is complete.
\end{proof}

Theorem~\ref{thm:ultra-and-mst} immediately implies the following result.

\begin{cor}\label{cor:ultra-and-mst-sp}
Let $X$ be a finite ultrametric space consisting of $n$ points, and $\s(X)=(\s_1,\s_2,\ldots,\s_{n-1})$ its $\mst$-spectrum. Then $|vw|\in\s(X)$ for any distinct $v,w\in X$. In particular, $\diam X=\s_1$.
\end{cor}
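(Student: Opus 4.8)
The plan is to derive Corollary~\ref{cor:ultra-and-mst-sp} directly from Theorem~\ref{thm:ultra-and-mst} together with Proposition~\ref{prop:mst-spect}. First I fix a minimal spanning tree $G\in\MST(X)$; by definition its edge lengths, rearranged in decreasing order, form the vector $\s(X)=(\s_1,\ldots,\s_{n-1})$, so the set of edge lengths of $G$ is exactly $\{\s_1,\ldots,\s_{n-1}\}$ (as a multiset, but we only need it as a set here). Now take any two distinct $v,w\in X$ and let $\g_{vw}$ be the unique path joining them in $G$. By Theorem~\ref{thm:ultra-and-mst}, since $X$ is ultrametric, we have $|vw|=\max_{e\in E(\g_{vw})}|e|$. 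The maximum on the right is attained at some edge $e_0\in E(\g_{vw})\ss E(G)$, hence $|vw|=|e_0|$ is one of the edge lengths of $G$, i.e.\ $|vw|\in\{\s_1,\ldots,\s_{n-1}\}=\s(X)$. This gives the first assertion.

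For the ``in particular'' part I would argue as follows. On one hand, $\diam X=\sup\{|vw|:v,w\in X\}$, and for each distinct pair we just showed $|vw|\le\s_1$ because $\s_1$ is the largest entry of $\s(X)$ and $|vw|$ is one of the entries; since $\diam X$ is a maximum over finitely many such values (and the diagonal pairs contribute $0$), we get $\diam X\le\s_1$. On the other hand, $\s_1$ is itself the length of some edge $e=v'w'$ of $G$, so $\s_1=|e|=|v'w'|\le\diam X$. Combining the two inequalities yields $\diam X=\s_1$.

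The only subtlety worth a sentence is that $\s(X)$ is well-defined independently of the choice of $G$, which is precisely Proposition~\ref{prop:mst-spect}; everything above uses a fixed but arbitrary $G\in\MST(X)$, so no ambiguity arises. I expect no real obstacle here: the corollary is an immediate packaging of the ``only if'' direction of Theorem~\ref{thm:ultra-and-mst}, and the main (trivial) point is simply to observe that a maximum of edge lengths along a path in $G$ is a fortiori an edge length of $G$ itself, hence an entry of the $\mst$-spectrum.
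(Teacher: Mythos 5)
Your argument is correct and is exactly the intended one: the paper simply states that Theorem~\ref{thm:ultra-and-mst} immediately implies the corollary, and your write-up spells out that immediate implication (the maximum along $\g_{vw}$ is attained at an edge of $G$, hence $|vw|$ is an entry of $\s(X)$, and the diameter claim follows by comparing with the largest entry $\s_1$). Nothing further is needed.
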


\begin{thm}\label{thm:ultra}
Let $X$ be a finite ultrametric space consisting of $n$ points, $\s(X)=(\s_1,\s_2,\ldots,\s_{n-1})$ its $\mst$-spectrum, and $m>0$ a cardinal number. Then
$$
2d_{GH}(\l\D_m,X)=
\begin{cases}
\s_1&\text{for $m=1$},\\
\max\{\s_1-\l,\,\s_m,\,\l-\s_{m-1}\}&\text{for $1<m<n$},\\
\max\{\s_1-\l,\,\l-\s_{n-1}\}&\text{for $m=n$},\\
\max\{\s_1-\l,\,\l\}&\text{for $m>n$}.
\end{cases}
$$
\end{thm}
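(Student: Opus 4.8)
The plan is to reduce everything to Theorem~\ref{thm:extr}, so the work is to identify the set $\Ext_m(X)$ of extreme points of $\bcAD_m(X)$ for a finite ultrametric space $X$. The cases $m=1$ and $m>n$ are immediate: the first is Proposition~\ref{prop:GH_simple} together with $\diam X=\s_1$ (Corollary~\ref{cor:ultra-and-mst-sp}), and the last is Theorem~\ref{thm:dist-n-simplex-bigger-dim}. The case $m=n$ should follow from Theorem~\ref{thm:dist-n-simplex-same-dim} once one observes that for an ultrametric space $\e(X)=\s_{n-1}$, i.e.\ the smallest pairwise distance equals the smallest edge in a minimal spanning tree; this is a direct consequence of Corollary~\ref{cor:ultra-and-mst-sp}, since every pairwise distance lies in the $\mst$-spectrum. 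So the heart of the matter is the range $1<m<n$.

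First I would establish the following structural fact about minimal spanning trees of a finite ultrametric space: if one deletes from $G\in\MST(X)$ all edges of length $\ge\s_{m-1}$ (equivalently, keeps the $n-m$ shortest edges, breaking ties suitably), the resulting forest has exactly $m$ components $X_1,\dots,X_m$, and this partition $D$ realises simultaneously $\diam D=\s_m$ and $\a(D)=\s_{m-1}$. The inequality $\diam X_i\le\s_m$ follows from the ultrametric criterion (Theorem~\ref{thm:ultra-and-mst}): any two points of $X_i$ are joined in $G$ by a path using only edges of length $\le\s_m$, so their distance is $\le\s_m$; and $\a(D)\ge\s_{m-1}$ holds because an edge of $G$ joining two distinct components has length $\ge\s_{m-1}$, and by the $\mst$ exchange property any cross-component distance is $\ge$ the largest edge on the corresponding tree-path, which crosses between components and hence has length $\ge\s_{m-1}$. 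The reverse inequalities $\diam D\ge\s_m$ and $\a(D)\le\s_{m-1}$ require a counting argument: a partition into $m$ parts with all diameters $<\s_m$ would contradict that any $m$ parts must be separated by $m-1$ edges of a spanning tree, at least one of which has length $\ge\s_{m-1}$ while the "within-part" structure can only use the $n-m$ smallest edges — I would phrase this via $d_m(X)=\s_m$ and $\a_m(X)=\s_{m-1}$, which are exactly the kind of identities one expects and which presumably appear (or are easy to prove) from the $\mst$-spectrum. Thus $(\s_{m-1},\s_m)\in\bcAD_m(X)$ is an extreme point, and in fact the \emph{only} one: any partition $D$ has $\diam D\ge d_m(X)=\s_m$ and $\a(D)\le\a_m(X)=\s_{m-1}$, so $(\s_{m-1},\s_m)$ dominates every point of $\cAD_m(X)$ in the order of Remark~\ref{rk:oder}.

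With $\Ext_m(X)=\{(\s_{m-1},\s_m)\}$ in hand, Theorem~\ref{thm:extr} gives directly
$$
2d_{GH}(\l\D_m,X)=\max\{\diam X-\l,\ h_{\s_{m-1},\s_m}(\l)\}=\max\{\s_1-\l,\ \s_m,\ \l-\s_{m-1}\},
$$
which is the desired formula for $1<m<n$. The main obstacle I anticipate is proving that $\Ext_m(X)$ is a single point, i.e.\ the two optimisation problems $d_m(X)=\inf_D\diam D$ and $\a_m(X)=\sup_D\a(D)$ are solved by the \emph{same} partition and have the values $\s_m$ and $\s_{m-1}$ respectively; the upper bound $d_m(X)\le\s_m$ and lower bound $\a_m(X)\ge\s_{m-1}$ come from the explicit forest partition above, but the matching bounds $d_m(X)\ge\s_m$ and $\a_m(X)\le\s_{m-1}$ are where the ultrametric hypothesis and the $\mst$ exchange/cut properties must be combined carefully. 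Everything else is bookkeeping across the four cases and checking the boundary values $m=n$ and $m>n$ agree with the $1<m<n$ formula in the limit (e.g.\ reading $\s_0$ as $+\infty$ or $\s_n$ as $0$ as appropriate), which I would mention but not belabor.
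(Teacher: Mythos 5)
Your overall route coincides with the paper's: dispose of $m=1$, $m=n$, $m>n$ via Proposition~\ref{prop:GH_simple}, Theorems~\ref{thm:dist-n-simplex-same-dim} and~\ref{thm:dist-n-simplex-bigger-dim} together with Corollary~\ref{cor:ultra-and-mst-sp} (your observation $\e(X)=\s_{n-1}$ is correct), and for $1<m<n$ apply Theorem~\ref{thm:extr} after showing $\Ext_m(X)=\{(\s_{m-1},\s_m)\}$. The ``easy half'' you do prove: deleting the $m-1$ longest edges of a minimal spanning tree yields $D\in\cD_m(X)$ with $\diam D\le\s_m$ and $\a(D)\ge\s_{m-1}$ by Theorem~\ref{thm:ultra-and-mst} (the tie-breaking caveat is fine). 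But the half that actually makes $(\s_{m-1},\s_m)$ the unique extreme point --- that \emph{every} $D'\in\cD_m(X)$ satisfies $\a(D')\le\s_{m-1}$ and $\diam D'\ge\s_m$, i.e.\ $\a_m(X)=\s_{m-1}$ and $d_m(X)=\s_m$ --- you do not prove; you explicitly defer it (``presumably appear or are easy to prove'', ``the main obstacle I anticipate''), and the one-line counting remark you offer does not deliver it. Indeed, for $\a(D')\le\s_{m-1}$ what is needed is a crossing edge of length \emph{at most} $\s_{m-1}$ (among the at least $m-1$ MST edges joining distinct parts, at least one is not among the $m-2$ longest), not ``at least $\s_{m-1}$'' as you wrote; and for $\diam D'\ge\s_m$ the mere existence of $m-1$ separating edges gives nothing, since a priori a partition could route all the long edges between parts.

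The paper closes exactly this gap. For arbitrary $D'=\{Y_i\}_{i=1}^m$ it forms the quotient multigraph $G'$ on the parts whose edges are the MST edges joining distinct parts; $G'$ is connected, hence has at least $m-1$ such edges, the shortest of which has length $\le\s_{m-1}$, giving $\a(D')\le\s_{m-1}$. For the diameter bound it argues by contradiction: if all $\diam Y_i<\s_m$, then none of $e_1,\dots,e_m$ lies inside a part, so $G'$ has at least $m$ edges on $m$ vertices and contains a cycle; stitching the crossing edges of this cycle together with the within-part tree paths (which avoid $e_1,\dots,e_m$ because, by Theorem~\ref{thm:ultra-and-mst}, a part of diameter $<\s_m$ cannot contain two points whose tree path uses such an edge) produces a path in $G$ between two points of one part passing through an edge of length $\ge\s_m$, whence by Theorem~\ref{thm:ultra-and-mst} that part has diameter $\ge\s_m$, a contradiction. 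If you prefer a genuine ``counting'' argument for $\diam D'\ge\s_m$, one is available (and needs no ultrametricity): by the Kruskal cut property, any two points at distance $<\s_m$ lie in one component of the forest of MST edges of length $<\s_m$, which has at least $m+1$ components, so $m$ parts of diameter $<\s_m$ cannot cover $X$. Either way, this lemma must be supplied; as it stands your argument establishes only that $(\s_{m-1},\s_m)$ lies in $\cAD_m(X)$, not that it dominates all of $\bcAD_m(X)$, and the claimed formula for $1<m<n$ does not yet follow.
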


\begin{proof}
Taking into account Corollary~\ref{cor:ultra-and-mst-sp}, the first formula follows from Proposition~\ref{prop:GH_simple}, the third and the forth ones follow from Theorem~\ref{thm:dist-n-simplex-same-dim} and Theorem~\ref{thm:dist-n-simplex-bigger-dim}, respectively. Let us prove the second formula.

To do this we apply Theorem~\ref{thm:extr} and show that the set $\Ext_m(X)$ consists of a single point, namely of the point $(\s_{m-1},\s_m)$. In other words, the point $(\s_{m-1},\s_m)$ is the least element of the set $\bcAD_m(X)$ with respect to the ordering defined in Remark~\ref{rk:oder}.

Let $G$ be an arbitrary minimal spanning tree on $X$, and let $e_1,\ldots,e_{n-1}$ be its edges, where $|e_i|=\s_i$.

Through out the edges $e_1,\ldots,e_{m-1}$ from $G$, then $G$ splits into $m$ trees $G_1,\ldots,G_m$. Put  $X_i=V(G_i)$, then $D=\{X_i\}_{i=1}^m\in\cD_m(X)$. In accordance with Theorem~\ref{thm:ultra-and-mst}, the diameters of all $X_i$ do not exceed $|e_m|$, and one of  $G_i$ contains $e_m$, hence the diameter of the corresponding $X_i$ does equal $|e_m|$. Thus, $\diam D=|e_m|=\s_m$.

On the other hand, for any $x_i\in X_i$ and $x_j\in X_j$, $i\ne j$, the path $\g$ in $G$ connecting $x_i$ and $x_j$ passes through some edges from the set $\{e_1,\ldots,e_{m-1}\}$, therefore, in accordance with Theorem~\ref{thm:ultra-and-mst}, $|x_ix_j|\ge|e_{m-1}|$. Besides, there exist some $X_i$ and $X_j$ that are connected by the edge $e_{m-1}$, therefore $|X_iX_j|=|e_{m-1}|$, and so, $\a(D)=|e_{m-1}|=\s_{m-1}$.

Now, let us show that the point $(\s_{m-1},\s_m)$ is extreme, and that any other point  $\bigl(\a(D'),\diam D'\bigr)$, $D'\in\cD_m(X)$, is not extreme.

Choose an arbitrary $D'=\{Y_i\}_{i=1}^m\in\cD_m(X)$, and construct the graph $G'$ whose vertices are the sets $Y_i$, and whose edges are the edges from $E(G)$ that connect distinct sets $Y_i$ and $Y_j$. Then $G'$ is a connected graph (probably containing multiple edges), therefore it has at least $(m-1)$ edges $e_i$. Therefore, the shortest edge of the graph $G'$ can not be longer than $|e_{m-1}|$, and so $\a(D')\le|e_{m-1}|$.

It remains to show that $\diam D'\ge|e_m|$. Assume the contrary, then no one of the edges $e_i$, $i\le m$, connects vertices from the same $Y_j$. The latter implies that all such $e_i$ connect distinct $Y_j$ and $Y_k$, i.e., they are edges of the graph $G'$. Therefore, the connected graph $G'$ contains some cycle $C$. Let $Y_{i_1},\ldots,Y_{i_k},Y_{i_{k+1}}=Y_{i_1}$ be consecutive vertices of the cycle $C$, and let $e_{i_p}$ be the edge of $C$, connecting $Y_{i_p}$ and $Y_{i_{p+1}}$. Put $e_{i_p}=v_pw_p$, where $v_p\in Y_{i_p}$, $w_p\in Y_{i_{p+1}}$.

Let $\g_p$, $p=1,\ldots,k-1$, be the path in the tree $G$, connecting $w_p$ and $v_{p+1}$. Since the both vertices belong to $Y_{i_{p+1}}$ and $\diam Y_{i_{p+1}}<|e_m|$ in accordance with assumptions, then, due to Theorem~\ref{thm:ultra-and-mst}, the path $\g_p$ does not pass through any edge from the set $\{e_i\}_{i=1}^m$. Therefore, passing consequently the edges and paths $e_{i_1},\g_1,e_{i_2},\g_2,\ldots,\g_{k-1},e_{i_k}$, we get a path in $G$, that connects the vertices $v_1$ and $w_k$ from $Y_{i_1}$. Hence, due to Theorem~\ref{thm:ultra-and-mst}, we have $\diam Y_{i_1}\ge|v_1w_k|\ge|e_m|$, a contradiction. Theorem is proved.
\end{proof}


\begin{thebibliography}{15}
\markright{References}

\bibitem{DezaDeza}
Deza~E., Deza~M.M., \emph{Encyclopedia of Distances}, 4th edition, Berlin Heidelberg, Springer-Verlag, 2016.

\bibitem{Hausdorff}
Hausdorff~F.,  \emph{Grundz\"uge der Mengenlehre}.  Leipzig, Veit, 1914 [reprinted by Chelsea in 1949].

\bibitem{Edwards}
Edwards~D., ``The Structure of Superspace'', // In: \emph{Studies in Topology}, ed. by Stavrakas~N.M. and Allen~K.R., New York, London, San Francisco, Academic Press, Inc., 1975.

\bibitem{Gromov}
Gromov M.,
``Groups of Polynomial growth and Expanding Maps'' //
in: Publications Mathematiques I.H.E.S., \textbf{53}, 1981.

\bibitem{GrigIvaTuz}
Grigor'ev~D.S., Ivanov~A.O., Tuzhilin~A.A., ``Gromov--Hausdorff distance to simplexes'' // ArXiv e-prints, {\tt arXiv:1906.09644}, 2019; Chebyshevskii Sbornik, to appear.

\bibitem{IvaTuzBorsuk}
Ivanov~A.O., Tuzhilin~A.A., ``Solution to Generalized Borsuk Problem in Terms of the Gromov--Hausdorff Distances to Simplexes'' // ArXiv e-prints, {\tt arXiv:1906.10574}, 2019.

\bibitem{IvaTuzSimpDist}
Iliadis~S., Ivanov~A., Tuzhilin~A., ``Geometry of Compact Metric Space in Terms of Gromov-Hausdorff Distances to Regular Simplexes'' // ArXiv e-prints, {\tt arXiv:1607.06655}, 2016.

\bibitem{BurBurIva}
D.~Burago, Yu.~Burago, S.~Ivanov, \emph{A Course in Metric Geometry}. Graduate Studies in Mathematics, vol.33. A.M.S., Providence, RI, 2001.

\bibitem{ITAutVesnik}
Ivanov~A. O., Tuzhilin~A. A., ``Isometry group of Gromov--Hausdorff space'' // Matematicki Vesnik, Vol. 71 (1--2), 123--154, 2019;  {\tt arXiv:1806.02100}, 2018.

\bibitem{IvaTuzIrreducible}
Ivanov~A.O., Tuzhilin~A.A., ``Gromov--Hausdorff Distance, Irreducible Correspondences, Steiner Problem, and Minimal Fillings'' // ArXiv e-prints, {\tt arXiv: 1604.06116}, 2016.

\bibitem{ITConcorde} Ivanov~A., Tuzhilin~A., ``Geometry of Gromov--Hausdorff metric space'' // Bulletin de l'Academie Internationale CONCORDE. no.~3, pp.~47--57,  2017.

\bibitem{TuzMST-GH} Tuzhilin~A.A., ``Calculation of Minimum Spanning Tree Edges Lengths using Gromov--Hausdorff Distance'' // ArXiv e-prints,  {\tt arXiv:1605.01566}, 2016.
\end{thebibliography}
\end{document}